\theoremstyle{plain}
\newtheorem{theorem}{Theorem}[section]
\newtheorem{prop}[theorem]{Proposition}
\theoremstyle{definition}
\newtheorem{remarks}[theorem]{Remarks}
\newtheorem{example}[theorem]{Example}
\newtheorem{definition}[theorem]{Definition}
\newcommand{\R}{\mathbb{R}}
\newcommand{\T}{\mathbb{T}}
\newcommand{\eps}{\varepsilon}
\newcommand{\e}{\mathrm{e}}
\newcommand{\Id}{\mathrm{Id}}
 \DeclareMathOperator{\re}{Re}
 \DeclareMathOperator{\iso}{Iso}
\begin{document}
\begin{center}
J.~Funct.~Anal.\ (2008), \texttt{doi:10.1016/j.jfa.2008.06.004}
\end{center}

\title[Isometries and duality]{{\Large The group of isometries of a Banach space and duality} }
\author[Miguel Mart\'{\i}n]{Miguel Mart\'{\i}n}
\address{Departamento de An\'{a}lisis Matem\'{a}tico, Facultad de Ciencias,
Universidad de Granada, E-18071 Granada, Spain}
\email{mmartins@ugr.es}
\subjclass[2000]{Primary: 46B04. Secondary: 46B10, 46E15, 47A12}
\keywords{Isometries; duality; numerical range; Daugavet equation;
numerical index; hermitian operators; dissipative operators}
 \date{March 4th, 2008}
\thanks{Supported by Spanish MEC project MTM2006-04837, Junta de
Andaluc\'{\i}a grants FQM-185 and FQM-01438.}

\begin{abstract}
We construct an example of a real Banach space whose group of
surjective isometries has no uniformly continuous one-parameter
semigroups, but the group of surjective isometries of its dual
contains infinitely many of them. Other examples concerning
numerical index, hermitian operators and dissipative operators
are also shown.
\end{abstract}

\maketitle

\section{Introduction}
Given a real or complex Banach space $X$, its dual space is denoted by
$X^*$ and the Banach algebra of all bounded linear operators on $X$ by
$L(X)$. If $T\in L(X)$, $T^*\in L(X^*)$ denotes the adjoint operator of
$T$.

To understand the geometry of a Banach space, it is very useful to know
the structure of its \emph{surjective isometries}, i.e.\ surjective linear
applications which preserve the norm. We refer the reader to the recent
books by R.~Fleming and J.~Jamison \cite{Fle-Jam1,Fle-Jam2} and references
therein for background.

In this paper, we would like to investigate the relationship
between the group $\iso(X)$ of all surjective isometries on a
Banach space $X$ and the one of its dual. It is well known that
the map $T\longmapsto (T^*)^{-1}$ is a group monomorphism from
$\iso(X)$ into $\iso(X^*)$. There are situations in which this
application is an isomorphism. Namely, when $X$ is reflexive
(obvious) and, more generally, when $X$ is $M$-embedded
\cite[Proposition~III.2.2]{HWW} and, even more generally, when
there exists a unique norm-one projection
$\pi:X^{***}\longrightarrow X^*$ with $w^*$-closed kernel
\cite[Proposition~VII.1]{Godefroy}. Other condition to get that
all the surjective isometries of $X^*$ are $w^*$-continuous is
to assure that $X$ has a shrinking $1$-unconditional basis
\cite{Legisa-89}. On the other hand, the above map is not
always surjective, i.e.\ it is possible to find surjective
isometries on the dual of a Banach space which are not
$w^*$-continuous (see \cite[pp.~184]{Legisa-89} for an easy
example on $C[0,1]$).

Therefore, in some sense, the group $\iso(X^*)$ is bigger than
$\iso(X)$. Our aim in this paper is to show that this
phenomenon could be even stronger. We construct a Banach space
$X$ such that the geometry of $\iso(X)$ around the identity is
trivial (the tangent space to $\iso(X)$ at $\Id$ is zero),
while the geometry of $\iso(X^*)$ around the identity is as
rich as the one of a Hilbert space (the tangent space to
$\iso(X^*)$ at $\Id$ is infinite-dimensional). By the
\emph{tangent space} of the group of surjective isometries on a
Banach space $Z$ at $\Id$ we mean the set
$$
\mathcal{T}(\iso(Z),\Id)=\left\{f'(0)\ : \ f\!:\![-1,1]\longrightarrow
\iso(Z),\ f(0)=\Id,\ \text{$f$ differentiable at $0$}\right\}.
$$
Equivalently (see Proposition~\ref{prop:Rosenthal}),
$\mathcal{T}(\iso(Z),\Id)$ is the set of the generators of
uniformly continuous one-parameter semigroup of isometries. By
a \emph{uniformly continuous one-parameter semigroup of
surjective isometries} on a Banach space $Z$ we mean a
continuous function $\Phi:\R^+_0\longrightarrow L(Z)$ valued in
$\iso(Z)$ such that $\Phi(t+s)=\Phi(t)\Phi(s)$ for every
$s,t\in \R$; equivalently (see \cite[Chapter IX]{Hil-Phi} for
instance), a group of the form
$$
\left\{\exp(\rho\, T)\ : \ \rho\in\R \right\}
$$
for some $T\in L(Z)$ which is contained in $\iso(Z)$, where $\exp(\cdot)$
denotes the exponential function, i.e.\ $\displaystyle
\exp(T)=\sum_{n=0}^\infty \dfrac{T^n}{n!}\in L(Z)$.

The main aim of this paper is to make a construction which allows us,
among others, to present the following result (see
Example~\ref{main-example}).

\begin{quote}
\noindent{\slshape There exists a real Banach space $X$ such that
$\iso(X)$ does not contain any non-trivial uniformly continuous
one-parameter subgroup, while $\iso(X^*)$ contains infinitely many
different uniformly continuous one-parameter subgroups. Equivalently,
$\mathcal{T}(\iso(X),\Id)=\{0\}$ but $\mathcal{T}(\iso(X^*),\Id)$ is
infinite-dimensional.\ }
\end{quote}

The paper is divided into four sections, including this introduction. The
second one is devoted to explain the main tool we are using, the numerical
range of operators on Banach spaces and its relationship with isometries.
In the third section, for every separable Banach space $E$, we construct a
C-rich subspace $X(E)$ of $C[0,1]$ (see Definition~\ref{def-crich}) such
that $E^*$ is (isometrically isomorphic to) an $L$-summand of $X(E)^*$.
These spaces inherit some properties from $C[0,1]$ while their dual spaces
share properties with the particular spaces $E$. This allows us to
present, in section~$4$, the aforementioned example and other ones
concerning the numerical index, hermitian operators, and dissipative
operators.

We finish the introduction with some notation. Given a real or complex
Banach space $X$, we write $B_X$ for the closed unit ball and $S_X$ for
the unit sphere of $X$. We write $\T$ to denote the set of all modulus-one
scalars, i.e.\ $\T=\{-1,1\}$ in the real case and
$\T=\left\{\e^{i\,\theta}\, : \ \theta\in \R\right\}$ in the complex case.
A closed subspace $Z$ of $X$ is an \emph{$L$-summand} if $X=Z\oplus_1 W$
for some closed subspace $W$ of $X$, where $\oplus_1$ denotes the
$\ell_1$-sum. A closed subspace $Y$ of a Banach space $X$ is said to be an
\emph{$M$-ideal} of $X$ if the annihilator $Y^\perp$ of $Y$ is an
$L$-summand of $X^*$. We refer the reader to the monograph by P.~Harmand,
D.~Werner and W.~Werner \cite{HWW} for background on $L$-summands and
$M$-ideals.

\section{The tool: numerical range and isometries}
The main tool we are using in the paper is the relationship between
isometries and the numerical range of operators on Banach spaces, a
concept independently introduced by F.~Bauer \cite{Bauer} and G.~Lumer
\cite{Lumer} in the sixties to extend the classical field of values of
matrices (O.~Toeplitz, 1918 \cite{Toe}). We refer the reader to the
monographs by F.~Bonsall and J.~Duncan \cite{B-D1,B-D2} from the seventies
for background and more information. Let $X$ be a real or complex Banach
space. The \emph{numerical range} of an operator $T\in L(X)$ is the subset
$V(T)$ of the scalar field defined by
$$
V(T)=\{x^*(Tx)\ : \ x\in S_X,\ x^*\in S_{X^*},\ x^*(x)=1\}.
$$
This is Bauer's definition, while Lumer's one depends upon the election of
a compatible semi-inner product on the space (a notion also introduced by
Lumer \cite{Lumer}). But concerning our applications, both of them are
equivalent in the sense that they have the same closed convex hull. The
\emph{numerical radius} is the seminorm defined by
$$
v(T)=\sup\{|\lambda|\ : \ \lambda\in V(T)\}
$$
for every $T\in L(X)$. It is clear that $v$ is continuous; actually,
$v(T)\leqslant \|T\|$ for every $T\in L(X)$. We say that $T\in L(X)$ is
\emph{skew-hermitian} if $\re V(T)= \{0\}$; we write $\mathcal{A}(X)$ for
the closed subspace of $L(X)$ consisting of all skew-hermitian operators
on $X$, which is called the \emph{Lie algebra} of $X$ by H.~Rosenthal
\cite{Rosenthal}. In the real case, $T\in \mathcal{A}(X)$ if and only if
$v(T)=0$.

Let us give a clarifying example. If $H$ is a $n$-dimensional Hilbert
space, it is easy to check that $\mathcal{A}(H)$ is the space of
skew-symmetric operators on $H$ (i.e.\ $T^*=-T$ in the Hilbert space
sense), so it identifies with the space of skew-symmetric matrices. It is
a classical result from the theory of linear algebra that a $n\times n$
matrix $A$ is skew-symmetric if and only if $\exp(\rho A)$ is an
orthogonal matrix for every $\rho\in\R$ (see
\cite[Corollary~8.5.10]{Artin-Algebra} for instance). The same is true for
an infinite-dimensional Hilbert space by just replacing orthogonal
matrices by unitary operators (i.e.\ surjective isometries).

The above fact extends to general Banach spaces. Indeed, for an arbitrary
Banach space $X$ and an operator $T\in L(X)$, by making use of the
\emph{exponential formula} \cite[Theorem~3.4]{B-D1}
\begin{equation*}
\sup \re V(T)=\lim_{\beta\downarrow 0}\frac{\|\Id +
\beta\,T\|-1}{\beta}=\sup_{\alpha>0}\ \frac{\log\|\exp(\alpha\,
T)\|}{\alpha}\, ,
\end{equation*}
the following known result is easy to prove.

\begin{prop}[{\rm \cite[Theorem~1.4]{Rosenthal}}]\label{prop:Rosenthal}
Let $X$ be a real or complex Banach space and $T\in L(X)$. Then, the
following are equivalent.
\begin{enumerate}
\item[$(i)$] $T$ is skew-hermitian.
\item[$(ii)$] $\|\exp(\rho\,T)\|\leqslant 1$ for every $\rho\in \R$.
\item[$(iii)$] $\{\exp(\rho\,T)\, : \, \rho\in \R\}\subset \iso(X)$, i.e.\ $T$
is the generator of a uniformly continuous one-parameter subgroup of
isometries.
\item[$(iv)$] $T\in \mathcal{T}(\iso(X),\Id)$.
\end{enumerate}
Therefore, $\mathcal{A}(X)$ coincides with
$\mathcal{T}(\iso(X),\Id)$ and with the set of generators of
uniformly continuous one-parameter subgroups of isometries.
\end{prop}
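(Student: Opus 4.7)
The plan is to prove the cycle $(i) \Rightarrow (ii) \Rightarrow (iii) \Rightarrow (iv) \Rightarrow (i)$, with the exponential formula doing the main work at both endpoints.

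For $(i) \Rightarrow (ii)$, I would start from the second equality in the exponential formula, namely $\sup \re V(T) = \sup_{\alpha>0} \alpha^{-1}\log\|\exp(\alpha T)\|$. Since $V(\rho T) = \rho V(T)$ for $\rho \in \R$, skew-hermiticity of $T$ implies skew-hermiticity of $\rho T$ for every real $\rho$, hence $\sup \re V(\rho T) = 0$, hence $\log \|\exp(\alpha \rho T)\| \leqslant 0$ for every $\alpha > 0$. Specialising $\alpha = 1$ and letting $\rho$ range over $\R$ gives $(ii)$. The implication $(ii) \Rightarrow (iii)$ is then immediate from the group law $\exp(\rho T)\exp(-\rho T) = \Id$: both $\exp(\rho T)$ and its inverse are contractions, so each $\exp(\rho T)$ is a surjective isometry.

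For $(iii) \Rightarrow (iv)$, the map $f(\rho) = \exp(\rho T)$ is a differentiable curve from $\R$ into $\iso(X)$ with $f(0) = \Id$ and $f'(0) = T$, so $T$ lies in $\mathcal{T}(\iso(X), \Id)$ by definition. The non-trivial closing step is $(iv) \Rightarrow (i)$. Given $f\colon[-1,1] \to \iso(X)$ with $f(0) = \Id$ and $f'(0) = T$, write $f(\beta) = \Id + \beta T + \beta\,\eps(\beta)$ with $\|\eps(\beta)\| \to 0$ as $\beta \to 0$. Since $f(\beta)$ is an isometry, $\|f(\beta)\| = 1$, and the triangle inequality gives
$$
\|\Id + \beta T\| \leqslant \|f(\beta)\| + |\beta|\,\|\eps(\beta)\| = 1 + |\beta|\,\|\eps(\beta)\|.
$$
Dividing by $\beta > 0$ and letting $\beta \downarrow 0$, the first equality in the exponential formula yields $\sup \re V(T) \leqslant 0$. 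Applying the same reasoning to $g(\rho) := f(-\rho)$ (which is also a differentiable curve into $\iso(X)$ through $\Id$ with derivative $-T$ at $0$) gives $\sup \re V(-T) \leqslant 0$, and combining the two we get $\re V(T) = \{0\}$, i.e.\ $T$ is skew-hermitian.

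The main subtlety is the last implication: one must resist the temptation to differentiate the identity $\|f(\beta)\| = 1$ directly (the norm is in general not smooth), and instead use the one-sided estimate on $\|\Id + \beta T\|$ fed into Bauer's limit formula. Once this point is handled, the final sentence of the proposition follows since $(i) \Leftrightarrow (iv)$ is exactly the equality $\mathcal{A}(X) = \mathcal{T}(\iso(X), \Id)$, and $(iii)$ identifies these sets with the generators of uniformly continuous one-parameter subgroups of isometries.
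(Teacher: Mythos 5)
Your proof is correct and follows exactly the route the paper indicates: it only sketches the argument by pointing to the exponential formula, and your cycle $(i)\Rightarrow(ii)\Rightarrow(iii)\Rightarrow(iv)\Rightarrow(i)$ is the natural way to fill in the details, using the second equality of that formula for $(i)\Rightarrow(ii)$ and the first (Bauer limit) equality, applied to both $T$ and $-T$, for $(iv)\Rightarrow(i)$. The one-sided estimate $\|\Id+\beta T\|\leqslant 1+|\beta|\,\|\eps(\beta)\|$ is precisely the right way to avoid differentiating the norm, so no gaps remain.
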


In the real case, the above result leads us to consider when the numerical
radius is a norm on the space of operators. A related concept to this fact
is the \emph{numerical index} of a (real or complex) Banach space $X$,
introduced by G.~Lumer in 1968, which is the constant $n(X)$ defined by
$$
n(X)=\inf\{v(T)\ : \ T\in L(X),\ \|T\|=1\}
$$
or, equivalently, the greatest constant $k\geqslant 0$ such that
$k\|T\|\leqslant v(T)$ for every $T\in L(X)$. Let us mention here a couple
of facts concerning the numerical index which will be relevant to our
discussion. For more information, recent results and open problems, we
refer the reader to the very recent survey \cite{survey-racsam} and
references therein. First, real or complex $C(K)$ and $L_1(\mu)$ spaces
have numerical index~$1$, and the numerical index of a Hilbert space of
dimension greater than one is $0$ in the real case and $1/2$ in the
complex case. The set of values of the numerical index is given by the
following equalities:
\begin{eqnarray*}
\{n(X)\ : \ X \text{ complex Banach space} \ \}&=&[\e^{-1},1], \\
\{n(X)\ : \ X \text{ real Banach space} \ \}&=&[0,1].
\end{eqnarray*}
Finally, since $v(T)=v(T^*)$ for every $T\in L(X)$ (this can be proved by
making use of the exponential formula, for example), it follows that
$n(X^*)\leqslant n(X)$ for every Banach space $X$. Very recently,
K.~Boyko, V.~Kadets, D.~Werner and the author proved that the reversed
inequality does not always hold \cite[Example~3.1]{BKMW}, answering in the
negative way a question which had been latent since the beginning of the
seventies.

Let us observe that, for a \emph{real} Banach space $X$, as a consequence
of Proposition~\ref{prop:Rosenthal}, if $\mathcal{T}(\iso(X),\Id)$ is
non-trivial, then $n(X)=0$. Let us state this result for further
reference.

\begin{prop}\label{pro:npositive}
Let $X$ be a real Banach space with $n(X)>0$. Then,
$\mathcal{T}(\iso(X),\Id)$ reduces to zero.
\end{prop}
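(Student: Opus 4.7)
The statement is essentially an immediate combination of Proposition~\ref{prop:Rosenthal} with the definition of the numerical index in the real case, so the plan is short.

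First I would fix an arbitrary $T\in\mathcal{T}(\iso(X),\Id)$ and use the implication $(iv)\Rightarrow(i)$ of Proposition~\ref{prop:Rosenthal} to conclude that $T$ is skew-hermitian, i.e.\ $\re V(T)=\{0\}$. Since we are in the real case, the numerical range $V(T)$ is a subset of $\R$, so $\re V(T)=V(T)$, and therefore $V(T)=\{0\}$. In particular $v(T)=0$.

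Next I would invoke the definition of the numerical index: for every $S\in L(X)$ one has $n(X)\,\|S\|\leqslant v(S)$. Applying this to $S=T$ gives $n(X)\,\|T\|\leqslant v(T)=0$, and since $n(X)>0$ by hypothesis, we conclude $\|T\|=0$, hence $T=0$. As $T\in\mathcal{T}(\iso(X),\Id)$ was arbitrary, $\mathcal{T}(\iso(X),\Id)=\{0\}$.

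There is no real obstacle here; the only delicate point is remembering that in the real case the condition $\re V(T)=\{0\}$ forces $V(T)=\{0\}$ (and thus $v(T)=0$), which is precisely the observation the author already records right after introducing $\mathcal{A}(X)$. Everything else is a one-line application of the definition of $n(X)$.
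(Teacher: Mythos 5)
Your proposal is correct and is exactly the argument the paper intends: the author states this proposition as an immediate consequence of Proposition~\ref{prop:Rosenthal} (the tangent space at $\Id$ equals the set of skew-hermitian operators, which in the real case means $v(T)=0$) combined with the inequality $n(X)\,\|T\|\leqslant v(T)$ from the definition of the numerical index. Nothing is missing, and your remark that $\re V(T)=\{0\}$ forces $v(T)=0$ in the real case matches the observation the author records after introducing $\mathcal{A}(X)$.
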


In the finite-dimensional case, the above implication reverses
and, actually, the numerical index zero characterizes those
finite-dimensional real Banach spaces with infinitely many
isometries \cite[Theorem~3.8]{Rosenthal}. We refer the reader
to the just cited \cite{Rosenthal} and to
\cite{MarMeRo,Ros-Pacific} for further results on
finite-dimensional spaces with infinitely many isometries. In
the infinite-dimensional case, the situation is different and
it is possible to find a real Banach space $X$ such that
$n(X)=0$ but $\mathcal{T}(\iso(X),\Id)=\{0\}$, i.e.\ the
numerical radius it is a (necessarily non-complete) norm on
$L(X)$ which is not equivalent to the usual one.

We will also use another concept related to the numerical range: the
so-called Daugavet equation. A bounded linear operator $T$ on a Banach
space $X$ is said to satisfy the \emph{Daugavet equation} if
\begin{equation*}\label{DE}\tag{DE}
\|\Id + T\|= 1 + \|T\|.
\end{equation*}
This norm equality appears for the first time in a 1963 paper
by I.~Daugavet \cite{Dau}, where it was proved that every
compact operator on $C[0,1]$ satisfies it. Following
\cite{KSSW0,KSSW}, we say that a Banach space $X$ has the
\emph{Daugavet property} if every rank-one operator $T\in L(X)$
satisfies \eqref{DE}. In such a case, every operator on $X$ not
fixing a copy of $\ell_1$ also satisfies \eqref{DE} \cite{Shv};
in particular, this happens to every compact or weakly compact
operator on $X$ \cite{KSSW}. Examples of spaces with the
Daugavet property are $C(K)$ when the compact space $K$ is
perfect, and $L_1(\mu)$ when the positive measure $\mu$ is
atomless. An introduction to the Daugavet property can be found
in the books by Y.~Abramovich and C.~Aliprantis
\cite{AbrAli-1,AbrAli-2} and the state-of-the-art can be found
in the survey paper by D.~Werner \cite{WerSur}; for more recent
results we refer the reader to \cite{BM,BKW,I-K-W,KadSheWer}
and references therein.

The relation between the Daugavet equation and the numerical range is
given as follows \cite{D-Mc-P-W}. Given a Banach space $X$ and $T\in
L(X)$,
\begin{center}
$T$ satisfies \eqref{DE} $\quad \Longleftrightarrow \quad$ $\sup \re
V(T)=\|T\|$.
\end{center}
The following result is an straightforward consequence of this
fact.

\begin{prop}\label{pro:Daug-circle}
Let $X$ be a real or complex Banach space and $T\in L(X)$. If $\lambda\,T$
satisfies \eqref{DE}, then $\|T\| \in \lambda\,\overline{V(T)}$. In
particular, if $X$ has the Daugavet property, then every operator $T\in
L(X)$ which does not fix a copy of $\ell_1$ satisfies
$$
\|T\|\,\T \subset \overline{V(T)}.
$$
\end{prop}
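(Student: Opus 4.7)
The plan is to deduce everything from the equivalence recorded just above---namely, $T$ satisfies \eqref{DE} if and only if $\sup \re V(T) = \|T\|$---together with the elementary identity $V(\lambda T) = \lambda\, V(T)$, which is immediate from the definition of the numerical range.

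For the first assertion, I apply this equivalence to $\lambda T$ to obtain $\sup \re(\lambda\, V(T)) = \|\lambda T\| = |\lambda|\,\|T\|$. Since $\overline{V(T)}$ is a bounded, hence compact, subset of the scalar field, the supremum is attained at some $\mu \in \overline{V(T)}$; that is, $\re(\lambda\mu) = |\lambda|\,\|T\|$. Because $|\mu|\leq v(T)\leq \|T\|$, the chain
\[
|\lambda|\,\|T\| \,=\, \re(\lambda\mu) \,\leq\, |\lambda\mu| \,\leq\, |\lambda|\,\|T\|
\]
is forced to be an equality throughout. Hence $\lambda\mu$ is a nonnegative real number equal to $|\lambda|\,\|T\|$; for $|\lambda|=1$ this reads $\|T\|=\lambda\mu \in \lambda\,\overline{V(T)}$, as desired.

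For the \emph{in particular} part, assume $X$ has the Daugavet property and that $T\in L(X)$ does not fix a copy of $\ell_1$. Then neither does $\lambda T$ for any $\lambda\in\T$, so Shvidkoy's theorem quoted in the introduction ensures that every such $\lambda T$ satisfies \eqref{DE}. Applying the first assertion to each $\lambda\in\T$ yields $\|T\|\in\lambda\,\overline{V(T)}$, equivalently $\overline{\lambda}\,\|T\|\in \overline{V(T)}$; letting $\lambda$ run through $\T$ gives $\|T\|\,\T \subset \overline{V(T)}$.

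No serious obstacle is anticipated: the one non-formal step is the extremal argument showing that a point of $\overline{V(T)}$ maximizing $\re(\lambda\,\cdot)$ must have modulus exactly $\|T\|$ and argument $\overline{\lambda}$, which is forced by the Cauchy--Schwarz-type squeeze displayed above. Everything else is substitution into the cited equivalence between \eqref{DE} and the numerical range condition.
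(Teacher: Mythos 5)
Your argument is correct and follows exactly the route the paper intends: the proposition is stated there as a direct consequence of the equivalence between \eqref{DE} and $\sup\re V(T)=\|T\|$, and your compactness/squeeze argument simply fills in the details of that one-line deduction. Your observation that the first assertion should be read with $\lambda\in\T$ (it fails for, say, $|\lambda|=2$ and $T=\Id$) is a sensible and correct reading of the statement, and it is all that is needed for the \emph{in particular} part.
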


We finish this section collecting some easy results concerning
$L$-summands of Banach spaces, numerical ranges, and isometries. We
include a proof for the sake of completeness.

\begin{prop}\label{prop:Lsummand}
Let $X$ be a real or complex Banach space and suppose that $X=Y\oplus_1 Z$
for closed subspaces $Y$ and $Z$.
\begin{enumerate}
\item[(a)] Given an operator $S\in L(Y)$, the operator $T\in L(X)$ defined by
$$
T(y,z)=(Sy,0) \qquad \big(y\in Y,\ z\in Z\big)
$$
satisfies $\|T\|=\|S\|$ and $V(T)\subset [0,1]\,V(S)$.
\item[(b)] For every $S\in \iso(Y)$, the operator
$$
T(y,z)=(Sy,z) \qquad \big(y\in Y,\ z\in Z\big)
$$
belongs to $\iso(X)$.
\end{enumerate}
\end{prop}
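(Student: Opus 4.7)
The plan is to exploit the standard duality $(Y\oplus_1 Z)^* = Y^*\oplus_\infty Z^*$, which lets me identify functionals on $X$ with pairs $(y^*,z^*)$ where $\|(y^*,z^*)\|=\max(\|y^*\|,\|z^*\|)$. Using this, I can analyse the norming pairs $(x,x^*)$ that generate elements of $V(T)$ very explicitly in terms of their $Y$- and $Z$-components.

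For part (a), the norm identity $\|T\|=\|S\|$ is immediate: $\|T(y,z)\|=\|Sy\|\leqslant \|S\|(\|y\|+\|z\|)=\|S\|\,\|(y,z)\|$, with equality attained by taking $z=0$ and $y$ almost-norming $S$. For the numerical range statement, I take $x=(y,z)\in S_X$ and $x^*=(y^*,z^*)\in S_{X^*}$ with $x^*(x)=1$, i.e.\ $\|y\|+\|z\|=1$, $\max(\|y^*\|,\|z^*\|)=1$, and $y^*(y)+z^*(z)=1$. Since $|y^*(y)|\leqslant \|y\|$ and $|z^*(z)|\leqslant \|z\|$, the only way their sum can equal $1$ is $y^*(y)=\|y\|$ and $z^*(z)=\|z\|$. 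If $y=0$ then $x^*(Tx)=y^*(Sy)=0\in[0,1]\,V(S)$; otherwise the previous equalities force $\|y^*\|=1$, so $(\tilde y,\tilde y^*):=(y/\|y\|,y^*)$ is a norming pair in $Y\times Y^*$ with $\tilde y^*(S\tilde y)\in V(S)$, and then $x^*(Tx)=y^*(Sy)=\|y\|\,\tilde y^*(S\tilde y)\in \|y\|\,V(S)\subset [0,1]\,V(S)$.

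For part (b), the operator $T(y,z)=(Sy,z)$ is clearly linear, and $\|T(y,z)\|=\|Sy\|+\|z\|=\|y\|+\|z\|=\|(y,z)\|$ since $S\in\iso(Y)$, so $T$ is an isometry. Surjectivity is immediate: given $(y',z')\in X$, surjectivity of $S$ provides $y\in Y$ with $Sy=y'$, hence $T(y,z')=(y',z')$. Therefore $T\in\iso(X)$.

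There is no real obstacle here; the only point that needs a moment's care is the use of $(Y\oplus_1 Z)^*=Y^*\oplus_\infty Z^*$ and the observation that the norming condition $x^*(x)=1$ forces both summands to be norming on their own component, which is what allows me to transfer a numerical-range computation on $X$ into one on $Y$ in part (a).
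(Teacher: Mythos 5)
Your proof is correct and takes essentially the same route as the paper: both identify $X^*$ with $Y^*\oplus_\infty Z^*$, observe that the norming condition $x^*(x)=1$ forces $y^*(y)=\|y^*\|\,\|y\|$, and then rescale to get a norming pair in $Y\times Y^*$ so that $y^*(Sy)\in[0,1]\,V(S)$. Part (b) is the same one-line computation (the paper does not even record surjectivity, which you rightly note is immediate from that of $S$).
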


\begin{proof} (a). It is clear that $\|T\|=\|S\|$. On the other hand, given
$$
\lambda=(y^*,z^*)\big(T(y,z)\big)=y^*(Sy)\in V(T),
$$
where $(y,z)\in S_X$ and $(y^*,z^*)\in S_{X^*}$ with $(y^*,z^*)(y,z)=1$,
we have
$$
1=(y^*,z^*)(y,z)=y^*(y)+z^*(z)\leqslant \|y^*\|\|y\|+\|z^*\|\|z\|\leqslant
\|y\|+\|z\|=1.
$$
We deduce that $y^*(y)=\|y^*\|\,\|y\|$. If $\|y^*\|\,\|y\|=0$, then
$\lambda=0\in [0,1]\,V(S)$. Otherwise,
\begin{equation}
\lambda=\|y^*\|\|y\|\,\frac{y^*}{\|y^*\|}\left(S\frac{y}{\|y\|}\right)\in
[0,1]\,V(S).
\end{equation}
(b). For $(y,z)\in X$, we have
\begin{equation*}
\|T(y,z)\|=\|(Sy,z)\|=\|Sy\|+\|z\|=\|y\|+\|z\|=\|(y,z)\|.\qedhere
\end{equation*}
\end{proof}

\section{The construction}
Our aim here is to construct closed subspaces of $C[0,1]$, which share
some properties with it, but such that their duals could be extremely
different from $C[0,1]^*$. The idea of our construction is to squeeze the
one that was given in \cite[Examples 3.1 and 3.2]{BKMW} to show that the
numerical index of the dual of a Banach space can be different than the
numerical index of the space. Let us comment that all the results in this
section are valid in the real and in the complex case. We need one
definition.

\begin{definition}[{\rm \cite[Definition~2.3]{BKMW}}]\label{def-crich}
Let $K$ be a compact Hausdorff space. A closed subspace $X$ of $C(K)$ is
said to be \emph{C-rich} if for every nonempty open subset $U$ of $K$ and
every $\eps > 0$, there is a positive continuous function $h$ of norm $1$
with support inside $U$ such that the distance from $h$ to $X$ is less
than~$\eps$.
\end{definition}

Our interest in C-rich subspaces of $C(K)$ is that they inherit some
geometric properties from $C(K)$, as the following result summarizes.

\begin{prop}[{\rm \cite[Theorem~2.4]{BKMW} and \cite[Theorem~3.2]{Ka-Po}}]\label{prop:c-rich-numindex1}
Let $K$ be a perfect compact Hausdorff space and let $X$ be a C-rich
subspace of $C(K)$. Then $n(X)=1$ and $X$ has the Daugavet property.
\end{prop}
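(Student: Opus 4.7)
The plan is to follow the classical arguments establishing $n(C(K))=1$ and the Daugavet property for $C(K)$ with perfect $K$, and to transport them into $X$ by using C-richness as the mechanism that supplies the required bump functions inside $X$ up to arbitrary $\eps$-error. By the equivalence ``$T$ satisfies (DE) iff $\sup\re V(T)=\|T\|$'' recalled earlier, together with the definitional reduction of the Daugavet property to rank-one operators, both conclusions amount to the following common statement: given $T\in L(X)$ (arbitrary for the index assertion, rank-one for the Daugavet assertion) with $\|T\|=1$ and $\eps>0$, one must exhibit $(x,x^*)\in S_X\times S_{X^*}$ with $x^*(x)=1$ and $|x^*(Tx)|>1-\eps$.

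I would first pick $f\in S_X$ with $\|Tf\|>1-\eps$ and a point $t_0\in K$ where $|(Tf)(t_0)|>1-\eps$. By continuity of $Tf$ there is an open neighborhood $U\ni t_0$ on which $Tf$ stays close in both modulus and phase to $(Tf)(t_0)$. Applying Definition~\ref{def-crich} to $U$ with tolerance $\eps$ yields a positive norm-one continuous bump $h$ supported in $U$ with $h(t_0)=1$ (after shrinking $U$ if necessary), together with $g\in X$ satisfying $\|g-h\|<\eps$. The candidate witness is then built from $f$ and $g$: take $x$ to be an appropriate combination of the two, chosen so that $x$ essentially agrees with $f$ outside $U$ (so $Tx\approx Tf$ at $t_0$) while $x(t_0)\approx 1$ (inherited from the peak of $g$), and set $x^*=\theta\,\delta_{t_0}|_X$ with a phase $\theta\in\T$ that aligns the computation; a small Hahn--Banach adjustment, or equivalently an appeal to $\overline{V(T)}$ (which carries the same numerical radius), normalizes $x^*(x)=1$ exactly. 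The Daugavet specialization to $T=z^*\otimes z$ is simpler because $Tx=z^*(x)\,z$, so the comparison between $(Tx)(t_0)$ and $(Tf)(t_0)$ reduces to a scalar estimate and $t_0$ may be chosen where $z$ attains its norm.

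The main obstacle is the combination step: in $C(K)$ itself one would simply take $x=f\cdot(1-h)+h$, but when $X\subsetneq C(K)$ such pointwise products need not lie in $X$. One must replace this by a surrogate in $X$ whose distance from the ideal combination is controlled by $\eps$; here the freedom in the choice of $h$ (tiny support, prescribed peak, and $\mathrm{dist}(h,X)<\eps$) coupled with the perfectness of $K$ (arbitrarily small non-trivial neighborhoods of any $t_0$, and, in the Daugavet case, the ability to place $t_0$ away from the countable set of atoms of any Radon measure representing the relevant dual functional) makes the construction robust enough to push through with total error $O(\eps)$.
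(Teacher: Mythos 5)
The paper does not actually prove this proposition: it is imported from \cite[Theorem~2.4]{BKMW} (for $n(X)=1$) and \cite[Theorem~3.2]{Ka-Po} (for the Daugavet property), so your attempt is competing with those external proofs rather than with anything in the text. Your skeleton is the right one --- transport the classical $C(K)$ arguments, use C-richness to supply approximate bumps, and replace the illegal product $f(1-h)+\theta h$ by $f+(\theta-f(t_1))h$ via continuity of $f$ on a shrunken support --- and your remarks on perfectness and on dodging atoms in the rank-one case are on target. But the proposal has two genuine gaps. First, the ``common statement'' you reduce everything to ($|x^*(Tx)|>1-\eps$ for a normalized pair) establishes $v(T)=\|T\|$, which for a rank-one $T$ only says that \emph{some} rotation $\lambda T$, $\lambda\in\T$, satisfies \eqref{DE}; it does not give \eqref{DE} for $T$ itself. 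Numerical index $1$ does not imply the Daugavet property (real $\ell_1^2$, or $c_0$, satisfies your common statement for every rank-one operator and fails the Daugavet property), so the Daugavet half requires the oriented estimate $\re x^*(Tx)>\|T\|-\eps$ and a separate, though parallel, argument --- which is presumably why the paper cites two different references.

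Second, and more seriously, the parenthetical ``so $Tx\approx Tf$ at $t_0$'' is unjustified: $T$ is an arbitrary bounded operator, not a local one, and although $x-f$ is supported in $U$ it has sup-norm up to $2$, so $\big(T(x-f)\big)(t_0)$ can a priori be as large as $2\|T\|$. The only way to control it is to represent $\delta_{t_0}\circ T$ (Hahn--Banach extended to $C(K)$) by a regular Borel measure $\mu$, split off its atom $a\,\delta_{t_0}$, use regularity to choose $U$ with $|\mu|\big(U\setminus\{t_0\}\big)$ small, and --- crucially --- prescribe the new value $x(t_0)$ to be the unimodular scalar that aligns $a\,x(t_0)$ with $\int f\,d(\mu-a\delta_{t_0})$. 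Your recipe $x(t_0)\approx 1$ ``inherited from the peak of $g$'' already fails when, say, $a=1/2$ and $\int f\,d(\mu-a\delta_{t_0})=-1/2$: there $(Tf)(t_0)=-1$ but your witness gives $x^*(Tx)\approx 0$. You invoke representing measures and their atoms only in the rank-one case, where $t_0$ may indeed be moved off the (countably many) atoms; in the general case $t_0$ is forced by where $|Tf|$ peaks, the measure $\mu=\mu_{t_0}$ depends on that very point, and its atom at $t_0$ must be confronted rather than avoided. This measure-theoretic alignment is the actual content of the cited theorems, and it is the step your sketch declares ``robust enough to push through'' without supplying the mechanism.
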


We are now able to present the main result of the paper.

\begin{theorem}\label{th:construction}
Let $E$ be a separable Banach space. Then, there is a C-rich subspace
$X(E)$ of $C[0,1]$ such that $X(E)^*$ contains (an isometrically
isomorphic copy of) $E^*$ as an $L$-summand.
\end{theorem}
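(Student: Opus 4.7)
The plan is to build $X(E)$ as the preimage of an isometric copy of $E$ under the restriction map $C[0,1]\longrightarrow C(\Delta)$, where $\Delta\subset[0,1]$ is a suitable closed set. The idea exploits the duality between $M$-ideals and $L$-summands: if we can find an $M$-ideal $J$ of $X(E)$ with $X(E)/J$ isometrically isomorphic to $E$, then $J^\perp$ is an $L$-summand of $X(E)^*$ and $J^\perp\cong (X(E)/J)^*\cong E^*$, which is exactly what we want.

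Concretely, I would first fix a homeomorphic copy $\Delta\subset[0,1]$ of the Cantor set. Since $\Delta$ is compact metric and uncountable, $C(\Delta)$ is isometrically universal for separable Banach spaces, so fix an isometric embedding $j\colon E\hookrightarrow C(\Delta)$. Consider the restriction operator $R\colon C[0,1]\longrightarrow C(\Delta)$, $Rf=f|_{\Delta}$, and define
\[
X(E)=R^{-1}\bigl(j(E)\bigr)=\bigl\{f\in C[0,1]\,:\,f|_{\Delta}\in j(E)\bigr\}.
\]
This is a closed subspace of $C[0,1]$, and it contains the ideal $J=\{f\in C[0,1]\,:\,f|_{\Delta}=0\}$ (which corresponds to $e=0$).

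The first thing to verify is that $X(E)$ is C-rich. Given a nonempty open $U\subset[0,1]$ and $\eps>0$, since $\Delta$ has empty interior, $U\setminus\Delta$ is a nonempty open set, and we can place a positive continuous bump $h$ of norm $1$ with support inside $U\setminus\Delta$. Then $h|_{\Delta}=0\in j(E)$, so $h\in X(E)$ and the distance is in fact $0$. Next I would identify the quotient: by Tietze extension $R$ is a quotient map with kernel $J$ and induces an isometric isomorphism $C[0,1]/J\cong C(\Delta)$; restricting this to the subspace $X(E)/J$ gives an isometric isomorphism $X(E)/J\cong j(E)\cong E$.

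The main technical step is to argue that $J$ is an $M$-ideal in $X(E)$. It is a classical fact (see \cite{HWW}, Example I.1.4) that for any closed subset $\Delta$ of a compact Hausdorff space $K$, the subspace $J_K=\{f\in C(K)\,:\,f|_{\Delta}=0\}$ is an $M$-ideal of $C(K)$. Since $M$-ideals are inherited by any closed intermediate subspace (the $3$-ball property passes to subspaces, cf.\ \cite{HWW}, Proposition~I.1.17), and $J\subset X(E)\subset C[0,1]$, the space $J$ is an $M$-ideal in $X(E)$. Therefore $J^\perp$ is an $L$-summand of $X(E)^*$, and using the quotient identification above,
\[
J^\perp\cong \bigl(X(E)/J\bigr)^*\cong E^*
\]
isometrically, which finishes the argument. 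I expect the only place that needs careful bookkeeping is checking that the isometric identifications commute, so that the $L$-summand inside $X(E)^*$ really is a faithful copy of $E^*$; the C-rich condition itself is essentially automatic from the empty interior of $\Delta$.
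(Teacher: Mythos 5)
Your proposal is correct and follows essentially the same route as the paper: embed $E$ isometrically into $C(\Delta)$ for a Cantor set $\Delta\subset[0,1]$, take $X(E)$ to be the preimage of $E$ under the restriction map, verify C-richness via bumps supported in $U\setminus\Delta$, and use that $\ker(\mathrm{restriction})$ is an $M$-ideal in $C[0,1]$ and hence in $X(E)$ (HWW, Proposition I.1.17), so that its annihilator is an $L$-summand isometric to $(X(E)/J)^*\cong E^*$. The only cosmetic difference is that the paper verifies the isometric identification $X(E)/J\cong E$ by exhibiting a norm-preserving extension of each $g\in E$ with $\|g\|<1$, which is the same bookkeeping you flag at the end.
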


\begin{proof}
By the Banach-Mazur Theorem, we may consider $E$ as a closed subspace of
$C(\Delta)$, where $\Delta$ denotes the Cantor middle third set viewed as
a subspace of $[0,1]$. We write $P:C[0,1]\longrightarrow C(\Delta)$ for
the restriction operator, i.e.\
$$
\big[P(f)\big](t)=f(t) \qquad \big(t\in\Delta,\ f\in C[0,1]\big).
$$
We define the closed subspaces $X(E)$ and $Y$ of $C[0,1]$ by
$$
X(E)=\left\{f\in C[0,1]\ : \ P(f)\in E\right\}, \qquad Y=\ker P.
$$
Since $[0,1]\setminus \Delta$ is open and dense in $[0,1]$, it is
immediate to show that $X(E)$ is C-rich in $C[0,1]$. Indeed, for every
nonempty open subset $U$ of $[0,1]$, we consider the nonempty open subset
$V=U\cap\, \big([0,1]\setminus \Delta\big)$ and we take a norm-one
continuous function $h:[0,1]\longrightarrow [0,1]$ whose support is
contained in $V$. Therefore, $h$ belongs to $Y\subseteq X(E)$, and the
support of $h$ is contained in $U$.

Since $Y$ is an $M$-ideal in $C[0,1]$ (see \cite[Example~I.1.4(a)]{HWW}),
it is a fortiori an $M$-ideal in $X(E)$ by \cite[Proposition~I.1.17]{HWW},
meaning that $Y^\perp\equiv \left(X(E)/Y\right)^*$ is an $L$-summand of
$X(E)^*$.

It only remains to prove that $X(E)/Y$ is isometrically
isomorphic to $E$. To do so, we define the operator
$\Phi:X(E)\longrightarrow E$ given by $\Phi(f)=P(f)$ for every
$f\in X(E)$. Then $\Phi$ is well defined, $\|\Phi\|\leqslant
1$, and $\ker \Phi=Y$. To see that the canonical quotient
operator $\widetilde{\Phi}:X(E)/Y\longrightarrow E$ is a
surjective isometry, it suffices to show that
$$
\Phi\big(\{f\in X(E)\ : \ \|f\|<1\}\big) = \{g\in E\ : \ \|g\|<1\}.
$$
Indeed, the left-hand side is contained in the right-hand side
since $\|\Phi\|\leqslant 1$. On the other hand, for every $g\in
E\subset C(\Delta)$ with $\|g\|<1$, we consider any isometric
extension $f\in C[0,1]$ (it is easy to construct it by just
considering an affine extension, see
\cite[p.~18]{Albiac-Kalton} for instance). It is clear that
$f\in X(E)$ with $\|f\|=\|g\|<1$ and that $\Phi(f)=g$.
\end{proof}

\begin{remarks}\label{remark:l1mueneldual}$ $
\begin{enumerate}
\item[(a)] Let us observe that the spaces $X(E)$ has a
    strong version of C-richness which can be read as the
    validity of the Urysohn lemma in $X(E)$. Namely,
    {\slshape for every nonempty open subset $U$ of
    $[0,1]$, there is a non-null positive continuous
    function $h\in X(E)$ whose support is contained in
    $U$.}
\item[(b)] Also, following the proof of the theorem, it is easy to check what we have
actually proved is that {\slshape $X(E)^*\equiv E^*\oplus_1 L_1(\mu)$ for
a suitable localizable positive measure $\mu$.\ } Indeed, we have shown
that $Y$ is an $M$-ideal in $X(E)$ and $Y^\perp=(X(E)/Y)^*\equiv E^*$. By
\cite[Remark~I.1.13]{HWW}, one gets $X(E)^*\equiv E^* \oplus_1 Y^*$. On
the other hand, $Y$ is an $M$-ideal in $C[0,1]$ and so, $Y^*$ is
(isometrically isomorphic to) an $L_1(\mu)$ space. To this end, one may
make use of \cite[Example~I.1.6(a)]{HWW} and of the fact that $C[0,1]^*$
is isometric to an $L_1(\nu)$ space for some localizable positive measure
$\nu$.
\item[(c)] The above observation leads us to give a direct and simple
proof of the fact that $n(X(E))=1$ for every $E$, without calling
Proposition~\ref{prop:c-rich-numindex1}. Indeed, in the identification
$X(E)^*\equiv E^*\oplus_1 Y^*$, the evaluation functionals
$$
A=\{\delta_t\ : \ t\in [0,1]\setminus \Delta\}
$$
(where, as usual, $\delta_t(f)=f(t)$) belong to $Y^*\equiv L_1(\mu)$ (see
\cite[Proposition~I.1.12]{HWW}). Being $[0,1]\setminus \Delta$ dense in
$[0,1]$, it follows that $B_{X(E)^*}$ is the $w^*$-closed convex hull of
$A$. On the other hand, every extreme point of the unit ball of
$X(E)^{**}\equiv E^{**}\oplus_\infty L_\infty(\mu)$ is of the form
$(e^{**},h)$ where $e^{**}$ is extreme in $B_{E^{**}}$ and $h$ is extreme
in $B_{L_\infty(\mu)}$. It implies that
$$
|x^{**}(a)|=1 \qquad \big(a\in A,\ x^{**} \text{ extreme point of }
B_{X^{**}}\big).
$$
Now, by just using that $v(T)=v(T^*)$ for every $T\in L(X(E))$, it is easy
to check that $n(X(E))=1$ (see \cite[Proposition~6]{survey-racsam} for
example).
\end{enumerate}
\end{remarks}

The construction in Theorem~\ref{th:construction} can be easily extended
to the general case in which $E$ is not separable by just replacing
$[0,1]$ by a convenient perfect compact space $K$. The main difference is
that, obviously, there is no universal such a $K$.

\begin{prop} Let $E$ be a Banach space. Then there is a
perfect Hausdorff compact space $K$ and a C-rich subspace $X(E)$ of $C(K)$
such that $E^*$ is an $L$-summand of $X(E)^*$.
\end{prop}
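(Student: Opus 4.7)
The plan is to mimic the construction in Theorem~\ref{th:construction}, replacing the pair $(\Delta,[0,1])$ by a suitable pair $(L,K)$ tailored to $E$. In the separable case, the three ingredients actually used were: (i) a compact Hausdorff space $L$ (namely $\Delta$) such that $E$ embeds isometrically into $C(L)$; (ii) a perfect compact Hausdorff space $K$ (namely $[0,1]$) containing $L$ as a closed nowhere dense subset; and (iii) Tietze's extension theorem from $L$ to $K$ with norm preservation. Once these are in place, the rest of the argument (C-richness, the $M$-ideal property, and the identification of the quotient with $E$) transfers verbatim.

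For the first ingredient, realize $E$ isometrically as a closed subspace of $C(L)$ for some compact Hausdorff $L$, for instance by taking $L=(B_{E^*},w^*)$ and the canonical evaluation map. For the second, set $K=L\times[0,1]$ (product topology) and identify $L$ with the closed slice $L_0=L\times\{0\}\subset K$. Then $K$ is compact Hausdorff; it is \emph{perfect} because any basic neighbourhood $U\times V$ of a point $(\ell,t)$ satisfies that $V$ is a non-degenerate open subset of $[0,1]$, and hence contains a point $t'\neq t$, producing $(\ell,t')$ distinct from $(\ell,t)$ in $U\times V$. The slice $L_0$ is closed and nowhere dense: any basic open set meeting $L_0$ has a second factor which is a neighbourhood of $0$ in $[0,1]$, hence strictly larger than $\{0\}$. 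Thus $K\setminus L_0$ is dense in $K$, which is exactly the role played by $[0,1]\setminus\Delta$ in Theorem~\ref{th:construction}.

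Now define $P:C(K)\longrightarrow C(L)$ by $(Pf)(\ell)=f(\ell,0)$, and set
$$
X(E)=\{f\in C(K)\ :\ P(f)\in E\},\qquad Y=\ker P=\{f\in C(K)\ :\ f|_{L_0}\equiv 0\}.
$$
C-richness of $X(E)$ in $C(K)$ follows exactly as before: given any non-empty open $U\subset K$, the density of $K\setminus L_0$ gives a non-empty open $V\subset U\cap(K\setminus L_0)$, and Urysohn's lemma in $K$ produces a positive norm-one $h\in C(K)$ with support inside $V$; since $h$ vanishes on $L_0$ we have $h\in Y\subseteq X(E)$. Next, $Y$ is the standard $M$-ideal in $C(K)$ of functions vanishing on the closed set $L_0$ (see \cite[Example~I.1.4(a)]{HWW}), hence an $M$-ideal in $X(E)$ by \cite[Proposition~I.1.17]{HWW}, so $Y^{\perp}\equiv(X(E)/Y)^{*}$ is an $L$-summand of $X(E)^{*}$. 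Finally, $X(E)/Y$ is isometrically isomorphic to $E$: the induced map $\widetilde\Phi:X(E)/Y\longrightarrow E$ has norm $\leqslant 1$ trivially, and surjective isometry is witnessed by Tietze's theorem with norm control, which lets us extend any $g\in E\subset C(L_0)$ to some $f\in C(K)$ with $\|f\|=\|g\|$ and $P(f)=g$. The only genuine new point compared with Theorem~\ref{th:construction} is step (ii), namely ensuring that $K$ be perfect while $L$ is allowed to have isolated points; this is precisely why I multiply by the interval $[0,1]$, which kills isolated points of $K$ without disturbing the embedding of $E$ via $P$.
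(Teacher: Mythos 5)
Your proposal is correct and follows exactly the paper's own route: the paper also takes $K=(B_{E^*},w^*)\times[0,1]$, the restriction operator $P(f)(t)=f(t,0)$, and $X(E)=\{f\in C(K):P(f)\in E\}$, then adapts the proof of Theorem~\ref{th:construction}. You have merely filled in the details (perfectness of $K$, nowhere-density of the slice, Tietze in place of the affine extension) that the paper leaves to the reader.
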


\begin{proof} We consider $E$ as a closed subspace of $C\big((B_{E^*},w^*)\big)$,
we write $K$ for the perfect compact space $(B_{E^*},w^*)\times[0,1]$,
$P:C(K)\longrightarrow C\big((B_{E^*},w^*)\big)$ for the operator
$$
\big[P(f)\big](t)=f(t,0) \qquad \big(t\in B_{E^*},\ f\in C(K)\big),
$$
and we consider the space
$$
X(E)=\left\{f\in C(K)\ : \ P(f)\in E\right\}.
$$
Now, it is easy to adapt the proof of Theorem~\ref{th:construction} to
this situation.
\end{proof}

\section{The examples}
Our aim here is to use Theorem~\ref{th:construction} with some particular
spaces $E$ to produce some interesting examples. The first one is the
promised space whose group of isometries is much smaller than the one of
its dual.

\begin{example}\label{main-example}
{\slshape The real Banach space $X(\ell_2)$ produced in
Theorem~\ref{th:construction} satisfies that $\iso(X(\ell_2))$
does not contain any non-trivial uniformly continuous
one-parameter subgroup, while $\iso(X(\ell_2)^*)$ contains
infinitely many uniformly continuous one-parameter subgroups.
Equivalently, $\mathcal{T}(\iso(X(\ell_2)),\Id)=\{0\}$ but
$\mathcal{T}(\iso(X(\ell_2)^*),\Id)$ is infinite-dimensional.\
}
\end{example}

\begin{proof}
Being $n(X(\ell_2))=1$ by
Proposition~\ref{prop:c-rich-numindex1}, the tangent space at
$\Id$ of the group $\iso\big(X(\ell_2)\big)$ is null by
Proposition~\ref{pro:npositive}. On the other hand,
Proposition~\ref{prop:Lsummand}.b gives us that the group of
isometries of $X(\ell_2)^*$ contains $\iso(\ell_2)$ as a
subgroup, and so $\mathcal{T}\big(X(\ell_2)^*,\Id\big)$ is
infinite-dimensional.
\end{proof}

The above example gives us, in particular, with a real Banach
space with numerical index~$1$ such that its dual has numerical
index~$0$. An example of this kind was given in
\cite[Example~3.2.a]{BKMW} as a $c_0$-sum of spaces whose duals
have positive numerical index. Therefore, the dual of that
example does not contain any non-null skew-hermitian operator
(see \cite[Example~3.b]{M-P} for a proof). Thus, the existence
of a space like the one given in Example~\ref{main-example} is
not contained in \cite[Example~3.2.a]{BKMW}. On the other hand,
the new construction can be used to give the following
improvement of the examples given in \cite{BKMW}: the dual of a
Banach space with numerical index~$1$ may have any possible
value of the numerical index.

\begin{prop}
\begin{eqnarray*}
\{n(X^*)\ : \ X \text{ complex Banach space with $n(X)=1$} \ \}&=&[\e^{-1},1], \\
\{n(X^*)\ : \ X \text{ real Banach space with $n(X)=1$} \ \}&=&[0,1].
\end{eqnarray*}
\end{prop}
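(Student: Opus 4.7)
The inclusions $\subseteq$ in both identities are immediate consequences of the cited equalities on the range of the numerical index over all real (resp.\ complex) Banach spaces, applied to $Y=X^*$. The substance of the proposition is the reverse inclusion, and the plan is to realise every admissible value $t$ as $n(X(E)^*)$ for a suitable choice of the input $E$ in Theorem~\ref{th:construction}.

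Given a target value $t$ (in $[0,1]$ in the real case, or in $[e^{-1},1]$ in the complex case), I would first produce a separable reflexive Banach space $E$ of the appropriate type with $n(E^*)=t$. Reflexivity is convenient because then $n(E^*)=n(E)$ (from the general inequality $n(Z^*)\leqslant n(Z)$ applied to both $E$ and $E^*$), so it suffices to realise $t$ as the numerical index of a separable reflexive space. This can be done by invoking the cited equalities
$$\{n(F)\ :\ F\text{ complex Banach space}\}=[e^{-1},1],\quad \{n(F)\ :\ F\text{ real Banach space}\}=[0,1],$$
together with the observation that the standard realisations of each value can be arranged inside separable reflexive spaces (finite-dimensional examples, or suitable $\ell_p$-type constructions).

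With such an $E$ in hand, I apply Theorem~\ref{th:construction} to obtain the C-rich subspace $X=X(E)$ of $C[0,1]$. Proposition~\ref{prop:c-rich-numindex1} gives $n(X)=1$, while Remark~\ref{remark:l1mueneldual}(b) provides the identification $X^*\equiv E^*\oplus_1 L_1(\mu)$. It then remains to show that $n(X^*)=\min\{n(E^*),n(L_1(\mu))\}=\min\{t,1\}=t$. The inequality $n(X^*)\leqslant n(E^*)=t$ follows directly by lifting operators from $E^*$ to $X^*$ via Proposition~\ref{prop:Lsummand}(a): for any $S\in L(E^*)$ of norm one, the extension $T(e^*,g)=(Se^*,0)$ satisfies $\|T\|=1$ and $v(T)\leqslant v(S)$, hence $n(X^*)\leqslant v(S)$, and one takes the infimum. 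The reverse inequality $n(X^*)\geqslant\min\{n(E^*),1\}$ is the standard formula for the numerical index of an $\ell_1$-sum combined with $n(L_1(\mu))=1$.

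The main obstacle is precisely this lower bound $n(Y_1\oplus_1 Y_2)\geqslant\min\{n(Y_1),n(Y_2)\}$. It must either be imported as an external fact (Mart\'{\i}n--Pay\'{a}'s theorem on numerical indices of $\ell_1$-sums of Banach spaces) or proved ad hoc in the present setting by a block-decomposition argument for operators on $E^*\oplus_1 L_1(\mu)$, exploiting that $L_1(\mu)$ has numerical index~$1$ and, in the spirit of Remark~\ref{remark:l1mueneldual}(c), that the extreme functionals coming from $L_1(\mu)$ form a $w^*$-norming set for $X^*$. A secondary subtlety appears only in the complex case: the endpoint $t=e^{-1}$ requires a \emph{separable} complex Banach space $E$ attaining this infimum, which is known but non-elementary and should be cited from the literature on numerical indices rather than reproved.
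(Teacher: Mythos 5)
Your proposal is correct and follows essentially the same route as the paper: the author takes two\nobreakdash-dimensional spaces $E$ attaining every admissible value of the numerical index (citing \cite{D-Mc-P-W}), forms $X(E)$, and then computes $n(X(E)^*)=\min\{n(E^*),n(L_1(\mu))\}=n(E)$ exactly as you describe, importing the $\ell_1$-sum formula as \cite[Proposition~1]{M-P} rather than reproving it. The two subtleties you flag --- finding a separable (reflexive) $E$ with $n(E^*)=t$ and attaining the complex endpoint $\e^{-1}$ --- are both absorbed by the two-dimensional choice of $E$.
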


\begin{proof} Indeed, just take two-dimensional spaces $E$ with any
possible value of the numerical index (see \cite{D-Mc-P-W}) and consider
$X(E)$. Then, by Theorem~\ref{th:construction} and
Remark~\ref{remark:l1mueneldual}.b, $n(X(E))=1$ while $X(E)^*=E^*\oplus_1
L_1(\mu)$ for a suitable measure $\mu$. Now, \cite[Proposition~1]{M-P}
gives
\begin{equation*}
n(X(E)^*)=\min\{n(E^*),\ L_1(\mu)\}=\min\{n(E^*),1\}=n(E^*)=n(E).\qedhere
\end{equation*}
\end{proof}

In a 1977 paper \cite{Hur}, T.~Huruya determined the numerical index of a
(complex) $C^*$-algebra. Part of the proof was recently clarified by
A.~Kaidi, A.~Morales, and A.~Rodr\'{\i}guez-Palacios in \cite{K-M-RP}, where
the result is extended to preduals of von Neumann algebras. Namely, the
numerical index of a $C^*$-algebra is equal to $1/2$ when it is not
commutative and $1$ when it is commutative, and the numerical index of the
predual of a von Neumann algebra coincides with the numerical index of the
algebra. Therefore, if $X$ is a $C^*$-algebra or the predual of a von
Neumann algebra, then $n(X)=n(X^*)$, and the same is true for all the
successive duals of $X$. The following example shows that we can not
extend this result to successive preduals.

\begin{example} {\slshape There exists a Banach space $X$ such
that $X^{**}$ is (isometrically isomorphic to) a $C^*$-algebra, $n(X)=1$
and $n(X^*)=1/2$.\ }
\end{example}

\begin{proof} We consider $E=K(\ell_2)$, the space of compact
linear operators on $\ell_2$. Then, the space $X(E)$ given in
Theorem~\ref{th:construction} has numerical index~$1$ and
Remark~\ref{remark:l1mueneldual}.b gives us that
$$
X(E)^*\equiv K(\ell_2)^*\oplus_1 L_1(\mu)
$$
for a suitable positive localizable measure $\mu$. Then,
$n(X(E)^*)=n(K(\ell_2)^*)=1/2$ and $X(E)^{**}$ is isometrically isomorphic
to the $C^*$-algebra $L(\ell_2)\oplus_\infty L_\infty(\mu)$.
\end{proof}

Let us observe that the adjoint of a skew-hermitian operator on a Banach
space $X$ is also skew-hermitian, and Example~\ref{main-example} shows
that there might be skew-hermitian operators on $X^*$ which are not
$w^*$-continuous. The next two examples show that the same is true for
hermitian operators and dissipative operators.

Let $X$ be a complex Banach space. An operator $T\in L(X)$ is said to be
\emph{hermitian} if $V(T)\subset \R$ (i.e.\ the operator $i\,T$ is
skew-hermitian), equivalently (see Proposition~\ref{prop:Rosenthal}), if
$\exp(i\rho\, T)\in \iso(X)$ for every $\rho\in \R$. Hermitian operators
have been deeply studied and many results on Banach algebras depend on
them; for instant, the Vidav-Palmer characterization of $C^*$-algebras. We
refer to \cite{B-D1,B-D2,Lumer} for more information.

\begin{example}
{\slshape Let us consider the complex space $X(\ell_2)$ produced in
Theorem~\ref{th:construction}. Then, every non-null hermitian operator $T$
on $X$ fixes a copy of $\ell_1$ whereas $X(\ell_2)^*$ has an
infinite-dimensional real subspace of finite-rank hermitian operators.\ }
\end{example}

\begin{proof}
The space $X(\ell_2)$ has the Daugavet property by
Proposition~\ref{prop:c-rich-numindex1}, and so
Proposition~\ref{pro:Daug-circle} gives us $\|T\|\,\T \subseteq
\overline{V(T)}$ for every operator $T$ which does not fix a
copy of $\ell_1$. This implies that $T$ is not hermitian. On
the other hand, being $\ell_2$ an $L$-summand of $X(\ell_2)^*$,
every finite-dimensional orthogonal projection on $\ell_2$
(which is clearly hermitian) canonically extend to a
finite-rank hermitian operator on $X(\ell_2)^*$ by
Proposition~\ref{prop:Lsummand}.
\end{proof}

Our last example deals with dissipative operators, a concept translated
from the Hilbert space setting to general Banach spaces by G.~Lumer and
R.~Phillips \cite{Lumer-Phillips} in 1961. Let $X$ be a real or complex
Banach space. An operator $T\in L(X)$ is said to be \emph{dissipative} if
$\re V(T)\subset \R^-_0$ or, equivalently, if $\|\exp(\rho T)\|\leqslant
1$ for every $\rho\in \R^+$ (i.e.\ $T$ is the generator of a one-parameter
semigroup of contractions). We refer to \cite{B-D1,B-D2,Lumer-Phillips}
for more information and background.

\begin{example}
{\slshape Let us consider the real or complex space $X(\ell_2)$
produced in Theorem~\ref{th:construction}. Then, every non-null
dissipative operator $T$ on $X$ fixes a copy of $\ell_1$
whereas $X(\ell_2)^*$ has infinitely many linear independent
finite-rank dissipative operators.\ }
\end{example}

\begin{proof}
The space $X(\ell_2)$ has the Daugavet property by
Proposition~\ref{prop:c-rich-numindex1}, and so
Proposition~\ref{pro:Daug-circle} gives us $\|T\|\,\T \subseteq
\overline{V(T)}$ for every operator $T$ which does not fix a
copy of $\ell_1$. This implies that $T$ is not dissipative. On
the other hand, being $\ell_2$ an $L$-summand of $X(\ell_2)^*$,
the opposite of every finite-dimensional orthogonal projection
on $\ell_2$ (which is clearly dissipative) canonically extend
by zero to a finite-rank dissipative operator on $X(\ell_2)^*$.
\end{proof}

\noindent \textbf{Acknowledgments.\ } The starting point of this paper was
a question asked to me by Rafael Pay\'{a} during the Madrid ICM-2006. I am
indebted to him. I also thank Javier Mer\'{\i} and Armando Villena for fruitful
conversations about the subject of this note.

\vspace{1cm}


\begin{thebibliography}{99}

\bibitem{AbrAli-1} \textsc{Y.~Abramovich and C.~Aliprantis},
\emph{An invitation to Operator Theory}, Graduate Texts in Math.\
\textbf{50}, AMS, Providence, RI, 2002.

\bibitem{AbrAli-2} \textsc{Y.~Abramovich and C.~Aliprantis},
\emph{Problems in Operator Theory}, Graduate Texts in Math.\ \textbf{51},
AMS, Providence, RI, 2002.

\bibitem{Albiac-Kalton} \textsc{F.~Albiac and N.~J.~Kalton},
\emph{Topics in Banach space theory}, Graduate Texts in Mathematics
\textbf{233}, Springer-Verlag, New York 2006.

\bibitem{Artin-Algebra} \textsc{M.~Artin}, \emph{Algebra},
Prentice-Hall, Englewood Cliffs, New Jersey, 1991.

\bibitem{Bauer} \textsc{F.~L.~Bauer},
On the field of values subordinate to a norm, \emph{Numer. Math.}
\textbf{4} (1962), 103--111.

\bibitem{BM} \textsc{J.~Becerra Guerrero, and M.~Mart\'{\i}n}, The
Daugavet property of $C^*$-algebras, $JB^*$-triples, and of their
isometric preduals, \emph{J. Funct. Anal.} \textbf{224} (2005), 316--337.

\bibitem{B-D1} \textsc{F.~F.~Bonsall and J.~Duncan},
\emph{Numerical Ranges of Operators on Normed Spaces and of Elements
of Normed Algebras}, London Math. Soc. Lecture Note Series
\textbf{2}, Cambridge, 1971.

\bibitem{B-D2} \textsc{F.~F.~Bonsall and J.~Duncan},
\emph{Numerical Ranges II}, London Math. Soc. Lecture Note Series
\textbf{10}, Cambridge, 1973.

\bibitem{BKMW} \textsc{K.~Boyko, V.~Kadets, M.~Mart\'{\i}n, and D.~Werner},
Numerical index of Banach spaces and duality, \emph{Math. Proc. Cambridge
Philos. Soc.} \textbf{142} (2007), 93--102.

\bibitem{BKW} \textsc{K.~Boyko, V.~Kadets, and D.~Werner},
Narrow Operators on Bochner $L_1$-spaces, \emph{J. Math. Physics,
Analysis, Geometry} \textbf{4} (2006), 358--371.

\bibitem{Dau} \textsc{I.~K.~Daugavet},
On a property of completely continuous operators in the space $C$,
\emph{Uspekhi Mat. Nauk} \textbf{18} (1963), 157--158 (Russian).

\bibitem{D-Mc-P-W} \textsc{J.~Duncan, C.~McGregor, J.~Pryce, and
A.~White}, The numerical index of a normed space, \emph{J. London
Math. Soc.} \textbf{2} (1970), 481--488.

\bibitem{Fle-Jam1} \textsc{R.~Fleming and J.~Jamison},
\emph{Isometries on Banach spaces: function spaces}, Chapman \& Hall/CRC,
Monographs \& Surveys in Pure \& Applied Math. \textbf{129}, Boca Raton,
FL, 2003.

\bibitem{Fle-Jam2} \textsc{R.~Fleming and J.~Jamison},
    \emph{Isometries on Banach spaces: Vector-valued function
    spaces and operator spaces}, Chapman \& Hall/CRC,
    Monographs \& Surveys in Pure \& Applied Math. \textbf{132},
    Boca Raton, FL, 2008.

\bibitem{Godefroy} \textsc{G.~Godefroy},
Existence and uniqueness of isometric preduals: a survey. \emph{Banach
space theory (Iowa City, IA, 1987)}, 131--193, Contemp. Math. \textbf{85},
Amer. Math. Soc., Providence, RI, 1989.

\bibitem{HWW} \textsc{P.~Harmand, D.~Werner, and D.~Werner},
\emph{$M$-ideals in Banach spaces and Banach algebras}, Lecture
Notes in Math. \textbf{1547}, Springer-Verlag, Berlin 1993.

\bibitem{Hil-Phi} \textsc{E.~Hille and R.~Phillips},
\emph{Functional analysis and semi-groups}, AMS Colloquium Publications
\textbf{31}, Providence, RI, 1957.

\bibitem{Hur} \textsc{T.~Huruya},
The normed space numerical index of $C^*$-algebras, \emph{Proc. Amer.
Math. Soc.} \textbf{63} (1977), 289--290.

\bibitem{I-K-W} \textsc{Y.~Ivaknho, V.~Kadets, and D.~Werner},
    The Daugavet property for spaces of Lipschitz functions,
    \emph{Math. Scand.} \textbf{101} (2007) 261-279..

\bibitem{survey-racsam} \textsc{V.~Kadets, M.~Mart\'{\i}n, and
    R.~Pay\'{a}}, Recent progress and open questions on the
    numerical index of Banach spaces \emph{Rev. R. Acad. Cien.
    Serie A. Mat.} \textbf{100} (2006), 155--182.

\bibitem{KadSheWer} \textsc{V.~Kadets, V.~Shepelska, and
    D.~Werner}, Quotients of Banach spaces with the Daugavet
    property, \emph{preprint}.

\bibitem{KSSW0} \textsc{V.~M.~Kadets, R.~V.~Shvidkoy, G.~G.~Sirotkin, and
D.~Werner}, Espaces de Banach ayant la propri\'{e}t\'{e} de Daugavet, \emph{C. R.
Acad. Sci. Paris}, S\'{e}r. I, \textbf{325} (1997), 1291--1294.

\bibitem{KSSW} \textsc{V.~Kadets, R.~Shvidkoy, G.~Sirotkin, and
D.~Werner}, Banach spaces with the Daugavet property, \emph{Trans. Amer.
Math. Soc.} \textbf{352} (2000), 855--873.

\bibitem{Ka-Po} \textsc{V.~M.~Kadets and M.~M.~Popov}, The Daugavet property
for narrow operators in rich subspaces of $C[0,1]$ and $L_1[0,1]$,
\emph{St. Petersburg Math. J.} \textbf{8} (1997), 571--584.

\bibitem{K-M-RP} \textsc{A.~Kaidi, A.~Morales, and
A.~Rodr\'{\i}guez-Palacios}, Geometrical properties of the product of a
$C^*$-algebra, \emph{Rocky Mountain J. Math.} \textbf{31} (2001),
197--213.

\bibitem{Legisa-89} \textsc{P.~Legisa}, Banach spaces with a shrinking
hyperorthogonal basis, \emph{Acta Math. Hung.} \textbf{54} (1989),
81--184.

\bibitem{Lumer} \textsc{G.~Lumer},
Semi-inner-product spaces, \emph{Trans. Amer. Math. Soc.}
\textbf{100} (1961), 29--43.

\bibitem{Lumer-Phillips} \textsc{G.~Lumer and R.~S.~Phillips},
Dissipative operators in a Banach space, \emph{Pacific J. Math.}
\textbf{11} (1961), 679--698.

\bibitem{MarMeRo} \textsc{M.~Mart\'{\i}n, J.~Mer\'{\i}, and A.~Rodr\'{\i}guez-Palacios},
Finite-dimensional Banach spaces with numerical index zero,
\emph{Indiana U. Math. J.} \textbf{53} (2004), 1279--1289.

\bibitem{M-P} \textsc{M.~Mart\'{\i}n and R.~Pay\'{a}},
Numerical index of vector-valued function spaces, \emph{Studia
Math.} \textbf{142} (2000), 269--280.

\bibitem{Rosenthal} \textsc{H.~P.~Rosenthal}, The Lie algebra of a Banach space,
in: \emph{Banach spaces} (Columbia, Mo., 1984), 129--157, Lecture Notes in
Math. \textbf{1166}, Springer, Berlin, 1985.

\bibitem{Ros-Pacific} \textsc{H.~P.~Rosenthal}, Functional
    Hilbertian sums, \emph{Pac. J. Math.} \textbf{124} (1986),
    417--467.

\bibitem{Shv} \textsc{R.~V.~Shvidkoy},
Geometric aspects of the Daugavet property, \emph{J. Funct. Anal.}
\textbf{176} (2000), 198--212.

\bibitem{Toe} \textsc{O.~Toeplitz},
Das algebraische Analogon zu einem Satze von Fejer, \emph{Math. Z.}
\textbf{2} (1918), 187--197.

\bibitem{WerSur} \textsc{D.~Werner},
Recent progress on the Daugavet property, \emph{Irish Math. Soc. Bull.}
\textbf{46} (2001), 77-97.

\end{thebibliography}
\end{document}